\documentclass{amsart}
\sloppy
\usepackage{amsmath, amscd}
\usepackage{stmaryrd}
\usepackage{mathrsfs}
\usepackage{amssymb}
\usepackage{amsfonts}
\usepackage{tensor}
\usepackage[all]{xy}
\usepackage{enumerate}
\usepackage{tikz}
\usepackage{adjustbox}
\usepackage{graphicx}
\usetikzlibrary{matrix, arrows}
\usetikzlibrary{positioning}
\usetikzlibrary{decorations.pathreplacing}
\numberwithin{equation}{subsection}
\theoremstyle{plain}
\newtheorem{thm}[subsection]{Theorem}

\theoremstyle{definition}

\theoremstyle{remark}
\newtheorem{rem}[subsection]{Remark}

\newtheorem{final remark}[subsection]{Final Remark}
\makeatletter
\newcommand*\bigcdot{\mathpalette\bigcdot@{.65}}
\newcommand*\bigcdot@[2]{\mathbin{\vcenter{\hbox{\scalebox{#2}{$\m@th#1\bullet$}}}}}
\makeatother
\begin{document}
\title{Torsion codimension $2$ cycles on supersingular abelian varieties}
\author{Oliver Gregory}
\begin{abstract}
We prove that torsion codimension $2$ algebraic cycles modulo rational equivalence on supersingular abelian varieties are algebraically equivalent to zero. As a consequence, we prove that homological equivalence coincides with algebraic equivalence for algebraic cycles of codimension $2$ on supersingular abelian varieties over the algebraic closure of finite fields.
\end{abstract}
\address{Laver Building, University of Exeter, Exeter, EX4 4QF, Devon, UK}
\email {o.b.gregory@exeter.ac.uk}
\date{May 09, 2022 \\ This research was supported by EPSRC grant EP/T005351/1}
\maketitle
\pagestyle{myheadings}

\section{Introduction}

Let $k$ be a field and let $X$ be a smooth and projective variety over $k$. Write $Z^{n}(X)$ for the group of codimension $n$ algebraic cycles on $X$, and let $Z^{n}_{\mathrm{rat}}(X)\subseteq Z^{n}_{\mathrm{alg}}(X)\subseteq Z^{n}_{\mathrm{hom}}(X)$ be the subgroups of $Z^{n}(X)$ consisting of those cycles which are rationally, (resp. algebraically, resp. homologically (with respect to a fixed Weil cohomology theory - see Remark \ref{Weil independence})) equivalent to zero. Let $\mathrm{CH}^{n}(X)\supseteq\mathrm{CH}^{n}_{\mathrm{hom}}(X)\supseteq\mathrm{CH}^{n}_{\mathrm{alg}}(X)$ denote the quotients of $Z^{n}(X)\supseteq Z^{n}_{\mathrm{hom}}(X)\supseteq Z^{n}_{\mathrm{alg}}(X)$ by $Z_{\mathrm{rat}}(X)$. Let $\mathrm{Griff}^{n}(X):=\mathrm{CH}_{\mathrm{hom}}^{n}(X)/\mathrm{CH}_{\mathrm{alg}}^{n}(X)$ be the Griffiths group of codimension $n$ cycles on $X$.

Griffiths \cite{Gri69} was the first to show that smooth projective varieties can have non-trivial Griffiths groups - $\mathrm{Griff}^{2}(X)\otimes\mathbb{Q}$ is non-trivial for a very general quintic hypersurface $X\subset\mathbb{P}_{\mathbb{C}}^{4}$. Clemens \cite{Cle83} later showed that such hypersurfaces have $\mathrm{dim}_{\mathbb{Q}}(\mathrm{Griff}^{2}(X)\otimes\mathbb{Q})=\infty$, and then Voisin \cite{Voi00} generalised this by proving that $\mathrm{dim}_{\mathbb{Q}}(\mathrm{Griff}^{2}(X)\otimes\mathbb{Q})=\infty$ for any very general Calabi-Yau threefold $X$ over $\mathbb{C}$. Since we are interested in abelian varieties, let us also point out that Ceresa \cite{Cer83} has shown that for a very general curve $C$ of genus $\geq 3$, the Ceresa cycle is a non-trivial element in $\mathrm{Griff}^{2}(J(C))$. In fact, it was shown in \cite{Sch22} that the torsion subgroup of $\mathrm{Griff}^{n}$ need not even be finitely generated (at least for $n\geq 3$, with the situation for $n=2$ currently open). This phenomena is not specific to ``large'' base fields either: Harris \cite{Har83} has given an explicit abelian threefold defined over $\mathbb{Q}$ with non-trivial Griffiths group - the Ceresa cycle on the Jacobian of the Fermat quartic is not algebraically equivalent to zero. Bloch \cite[Theorem 4.1]{Bl84} gave a different proof of Harris' result on the Jacobian of the Fermat quartic, and showed moreover that the Ceresa cycle is non-torsion. In general, for a smooth projective variety defined over a number field, it is part of the Bloch-Beilinson conjectures that the dimensions of the Griffiths groups tensor $\mathbb{Q}$ are finite, and the dimensions are controlled by the orders of vanishing at integers of appropriate $L$-functions \cite[p. 381]{Bl85}.

In this note we are interested in the situation where the base field has positive characteristic. In this setting, again the Griffiths groups of smooth projective varieties can be non-trivial and even infinite. For example, Schoen \cite[Theorem 0.1]{Sch95} showed that if $k$ is a finite field of characteristic $p\equiv 1\mod3$ and $E$ denotes the Fermat cubic, then $\mathrm{Griff}^{2}(E^{3}_{\overline{k}})$ is non-trivial and has a non-trivial divisible part. Here $\overline{k}$ denotes the algebraic closure of $k$, and subscript $\overline{k}$ means the base change to $\overline{k}$.

Now let $k$ be a perfect field with $\mathrm{char}(k)=p>0$. The positive characteristic analogue of the Bloch-Beilinson philosophy says that cycles on varieties, at least after tensoring with $\mathbb{Q}$, are controlled by the slopes of the Frobenius on crystalline cohomology. Recall that a smooth proper variety $X$ over $k$ is called ordinary if $H^{m}(X,d\Omega^{r}_{X/k})=0$ for all $m,r$. When the crystalline cohomology groups $H_{\mathrm{cris}}^{n}(X/W(k))$ of $X$ are torsion-free, $X$ is ordinary if and only if, for each $n$, the Newton polygon of $X$ coincides with the Hodge polygon \cite[Proposition 7.3]{BK86}. If $A$ is an abelian variety, then $A$ is ordinary if and only if $A(\overline{k})[p]=(\mathbb{Z}/p\mathbb{Z})^{\dim A}$. For example, the condition that $p\equiv 1\mod 3$ in Schoen's theorem forces $E_{\overline{k}}^{3}$ to be an ordinary abelian threefold. Generalising Schoen's result, under the rubric of the Tate conjecture for surfaces over finite fields, Brent Gordon and Joshi \cite[Proposition 6.2]{BGJ02} proved that the codimension $2$ Griffiths group of ordinary abelian threefolds over the algebraic closure of a finite field are non-trivial, and contain a non-trivial divisible part.

At the opposite extreme to ordinarity is supersingularity, and in this situation the Bloch-Beilinson philosophy suggests that Griffiths group should be smaller because of the extreme degeneracy in the slopes of Frobenius. Recall that a smooth projective variety $X$ over $k$ is said to be supersingular if the Newton polygons of $X$ are isoclinic. If $A$ is an abelian variety, then $A$ is supersingular if and only $A_{\overline{k}}$ is isogenous to the self-product of a (any!) supersingular elliptic curve, where an elliptic curve $E$ is supersingular if and only if $E(\overline{k})[p]=0$ (see \cite[Theorem 4.2]{Oor74}). Schoen \cite[Theorem 14.4]{Sch95} showed that if $k$ is a finite field of characteristic $p\equiv 2\mod3$ and $E$ denotes the Fermat cubic, then $\mathrm{Griff}^{2}(E^{3}_{\overline{k}})$ is at most a $p$-primary torsion group. The condition that $p\equiv 2\mod 3$ implies that $E^{3}$ is a supersingular abelian threefold. Using work of Fakhruddin \cite{Fak02}, Brent Gordon and Joshi \cite[Theorem 5.1]{BGJ02} generalised Schoen's result to all supersingular abelian varieties - the codimension $2$ Griffiths groups of supersingular abelian varieties defined over the algebraic closure of a finite field are at most $p$-primary torsion. The question of whether these groups possess non-trivial $p$-torsion was left open. 

In this note we prove the following (see Theorem \ref{main} (1)):
\begin{thm}\label{main intro}
Let $k$ be an algebraically closed field of characteristic $p>0$, and let $A$ be a supersingular abelian variety over $k$. Then the inclusions
\begin{equation*}
\mathrm{CH}^{2}_{\mathrm{alg}}(A)_{tors}\subseteq\mathrm{CH}^{2}_{\mathrm{hom}}(A)_{tors}\subseteq\mathrm{CH}^{2}(A)_{tors}
\end{equation*}
are equalities. (Here $G_{tors}$ denotes the torsion subgroup of the group $G$).
\end{thm}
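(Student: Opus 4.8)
The plan is to treat the two inclusions separately and to reduce everything to a statement about $p$-primary torsion. The second inclusion $\mathrm{CH}^2_{\mathrm{hom}}(A)_{tors}\subseteq\mathrm{CH}^2(A)_{tors}$ being an equality amounts to showing that every torsion cycle class is homologically trivial; since the crystalline and $\ell$-adic cohomology of an abelian variety are torsion-free, the cycle class of any torsion cycle is a torsion element of a torsion-free group, hence zero. This gives the second equality immediately. For the first inclusion, I would first dispose of the prime-to-$p$ part: by the theorem of Brent Gordon and Joshi, $\mathrm{Griff}^2(A)$ is $p$-primary torsion, so any prime-to-$p$ torsion class in $\mathrm{CH}^2_{\mathrm{hom}}(A)$ already lies in $\mathrm{CH}^2_{\mathrm{alg}}(A)$. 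The entire content is therefore the $p$-primary torsion: I must show $\mathrm{CH}^2(A)\{p\}\subseteq\mathrm{CH}^2_{\mathrm{alg}}(A)$.

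To organise the $p$-primary torsion I would exploit the action of the multiplication maps $[m]\colon A\to A$. Since we may take $p\nmid m$, the operators $[m]^{*}$ act invertibly on $\mathrm{CH}^2(A)\{p\}$ and, via the Beauville--Deninger--Murre eigenspace formalism, decompose it (after a harmless enlargement of coefficients to separate the eigenvalues $m^{2},m^{3},m^{4}$) into pieces $T_{0}\oplus T_{1}\oplus T_{2}$ on which $[m]^{*}$ acts by $m^{4},m^{3},m^{2}$ respectively. On $T_{0}$ the cycle class map is injective, so its torsion maps to torsion in a torsion-free cohomology group and $T_{0}=0$. The summand $T_{1}$ is the $s=1$ eigenspace, which consists of algebraically trivial classes by the construction of the abelian-variety-valued algebraic representative, so $T_{1}\subseteq\mathrm{CH}^2_{\mathrm{alg}}(A)$ as required. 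This isolates the difficulty in the summand $T_{2}$, the $p$-primary torsion of the Griffiths piece $\mathrm{CH}^2_{2}(A)$.

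For $T_{2}$ I would pass to $p$-adic logarithmic de Rham--Witt cohomology. Using the Geisser--Levine identification of mod-$p^{n}$ motivic cohomology with logarithmic Hodge--Witt cohomology, the $p$-primary torsion of $\mathrm{CH}^2(A)$ is a subquotient of $H^{1}(A,W\Omega^{2}_{A,\log})$, carrying a $p$-adic regulator whose target is controlled by the slope decomposition of crystalline cohomology. Here supersingularity enters decisively: because $A$ is supersingular, $H^{m}_{\mathrm{cris}}(A/W)$ is isoclinic of slope $m/2$ for every $m$, so the slope-$r$ logarithmic part of $H^{r+s}_{\mathrm{cris}}$ can be nonzero only when $s=r$. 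For the relevant off-diagonal index $(r,s)=(2,1)$ this forces the rationalised group to vanish, and I would leverage this extreme slope degeneracy to show that the $p$-adic regulator on $T_{2}$ is injective with image accounted for entirely by classes coming from the algebraic representative -- equivalently, that the $p$-primary torsion of $\mathrm{Griff}^2(A)$ vanishes. The main obstacle is precisely this last step: the rational slope computation only shows the ambient group is torsion, which is already contained in Brent Gordon--Joshi, so the delicate point is to control the torsion \emph{integrally}, ruling out a nonzero finite $p$-group of Griffiths classes. I expect this to require a careful analysis of the logarithmic de Rham--Witt sheaves on $A$ together with the Fourier--Mukai/Beauville structure of $\mathrm{CH}^2(A)$ at the integral level, rather than a soft slope argument alone.
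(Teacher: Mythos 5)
Your second equality and the prime-to-$p$ part of the first are fine (the latter modulo a citation slip: \cite[Theorem 5.1]{BGJ02} as stated concerns $k=\overline{\mathbb{F}}_{q}$, whereas over a general algebraically closed field you need what its \emph{proof} establishes, namely Suwa's surjectivity theorem for $\ell\neq p$ combined with injectivity of the $\ell$-adic Abel--Jacobi map on torsion). But the $p$-primary part, which is the entire content of the theorem, has genuine gaps at every step. First, the eigenspace decomposition $T_{0}\oplus T_{1}\oplus T_{2}$ of $\mathrm{CH}^{2}(A)[p^{\infty}]$ is not available: the Beauville--Deninger--Murre decomposition, and Fakhruddin's vanishing $\mathrm{CH}^{2}_{i}(A)=0$ for $i\neq 0,1$, are statements about $\mathrm{CH}^{2}(A)\otimes\mathbb{Q}$, which kills precisely the torsion you want to decompose; producing $p$-integral idempotents from the $[m]^{*}$-action on the torsion subgroup is a nontrivial problem, not a ``harmless enlargement of coefficients.'' Second, your argument for $T_{0}=0$ is circular: as you yourself observe in treating the second inclusion, the cycle class map kills \emph{all} torsion (the target $H^{4}$ is torsion-free), so ``the cycle class map is injective on $T_{0}$'' is exactly the assertion $T_{0}=0$, not a proof of it; the rational isomorphism $\mathrm{CH}^{2}_{0}(A)\otimes\mathbb{Q}_{\ell}\cong H^{4}$ gives no control on torsion. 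Third, no argument is given that torsion classes in $T_{1}$ are algebraically trivial; the existence of an algebraic representative for $\mathrm{CH}^{2}_{\mathrm{alg}}$ does not by itself place any given torsion class inside $\mathrm{CH}^{2}_{\mathrm{alg}}$. Finally, for $T_{2}$ you explicitly concede that the slope argument only reproduces the (already known) statement that the ambient group is torsion, and that the integral control needed to kill a finite $p$-group of Griffiths classes is missing. So the heart of the theorem is left open.

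The missing idea, and the route the paper takes, avoids any attempt to decompose the torsion group itself. One works with the Gros--Suwa $p$-adic Abel--Jacobi map $\lambda'_{2,A}\colon\mathrm{CH}^{2}_{\mathrm{alg}}(A)[p^{\infty}]\rightarrow H^{3}(A,\mathbb{Q}_{p}/\mathbb{Z}_{p}(2))$, which is injective on torsion, and reduces the theorem to its \emph{surjectivity}. Its image lands in the divisible group $H^{3}(A,\mathbb{Z}_{p}(2))\otimes_{\mathbb{Z}_{p}}\mathbb{Q}_{p}/\mathbb{Z}_{p}$ (torsion-freeness of $H^{3}_{\mathrm{cris}}$), so it suffices to kill the cokernel by a single positive integer. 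This is done by induction on $\dim A$ via the Bloch--Srinivas method: Fakhruddin's decomposition of the diagonal gives $N\Delta_{A}=Z_{1}+Z_{2}$ with $Z_{1}$ supported on $V_{1}\times A$ and $Z_{2}$ on $A\times V_{2}$, where $V_{1},V_{2}$ are unions of abelian subvarieties of dimension $\leq g-1$; hence $N\Delta_{A}^{*}$ on $\mathrm{CH}^{2}$ factors through $\mathrm{CH}^{1}$ of (resolutions of) $V_{1}$, where the Abel--Jacobi map is bijective, and $\mathrm{CH}^{2}$ of the lower-dimensional supersingular abelian varieties in $V_{2}$, where surjectivity holds by induction. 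Compatibility of the Abel--Jacobi maps with correspondences then shows $\mathrm{coker}(\lambda'_{2,A})$ is killed by $N$, hence trivial. Here supersingularity enters only through Fakhruddin's diagonal decomposition and through the stability of supersingularity under passing to abelian subvarieties --- no eigenspace decomposition of torsion, and no de Rham--Witt slope analysis beyond torsion-freeness of $H^{3}_{\mathrm{cris}}$, is needed.
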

It was already shown in the proof of \cite[Theorem 5.1]{BGJ02} that $\mathrm{CH}^{2}_{\mathrm{alg}}(A)[\ell^{\infty}]=\mathrm{CH}^{2}_{\mathrm{hom}}(A)[\ell^{\infty}]=\mathrm{CH}^{2}(A)[\ell^{\infty}]$ for each prime $\ell\neq p$ (where $G[\ell^{\infty}]$ denotes the $\ell$-primary torsion subgroup of the group $G$). Our only new result is that this is also true for $p$-primary torsion. To handle the $\ell=p$ case, we initially follow the proof of Brent Gordon and Joshi, but then conclude using an inductive argument based on the Bloch-Srinivas method \cite{BS83}.

As a consequence of Theorem \ref{main intro}, we settle the $p$-primary torsion case of \cite[Theorem 5.1]{BGJ02}. Indeed, we have the following corollary (see Theorem \ref{main} (2)):
\begin{thm}
Let $k$ be a finite field of characteristic $p>0$, and let $A$ be a supersingular abelian variety over $\overline{k}$. Then $\mathrm{Griff}^{2}(A)[p^{\infty}]$ is trivial.
\end{thm}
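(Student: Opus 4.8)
The plan is to deduce this from Theorem \ref{main intro} by a short homological argument, the only extra input being the divisibility of $\mathrm{CH}^2_{\mathrm{alg}}(A)$. Since $\overline{k}$ is algebraically closed, Theorem \ref{main intro} applies verbatim to $A$ over $\overline{k}$. I would start from the defining exact sequence
\[
0 \to \mathrm{CH}^2_{\mathrm{alg}}(A) \to \mathrm{CH}^2_{\mathrm{hom}}(A) \to \mathrm{Griff}^2(A) \to 0,
\]
fix an integer $m \geq 1$, and apply the snake lemma to multiplication by $p^m$ on this sequence, reading off the exact sequence
\[
0 \to \mathrm{CH}^2_{\mathrm{alg}}(A)[p^m] \to \mathrm{CH}^2_{\mathrm{hom}}(A)[p^m] \to \mathrm{Griff}^2(A)[p^m] \to \mathrm{CH}^2_{\mathrm{alg}}(A)/p^m \to \cdots.
\]

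First I would check that $\mathrm{CH}^2_{\mathrm{alg}}(A)$ is a divisible group, a standard fact over an algebraically closed field. Every element of $\mathrm{CH}^2_{\mathrm{alg}}(A)$ lies in the image of a homomorphism $\mathrm{Jac}(C)(\overline{k}) \to \mathrm{CH}^2_{\mathrm{alg}}(A)$ attached to a correspondence on $C \times A$ for some smooth projective curve $C$ (the map extends from $C$ to $\mathrm{Jac}(C)$ because principal divisors on $C$ induce rational equivalences on $A$). Hence $\mathrm{CH}^2_{\mathrm{alg}}(A)$ is a sum of images of the divisible groups $\mathrm{Jac}(C)(\overline{k})$, and a sum of divisible subgroups is divisible. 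In particular $\mathrm{CH}^2_{\mathrm{alg}}(A)/p^m = 0$, so the connecting map in the snake sequence has target $0$.

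Next I would feed in Theorem \ref{main intro}. The inclusion $\mathrm{CH}^2_{\mathrm{alg}}(A) \hookrightarrow \mathrm{CH}^2_{\mathrm{hom}}(A)$ induces an injection on $p^m$-torsion, and the equality $\mathrm{CH}^2_{\mathrm{alg}}(A)_{tors} = \mathrm{CH}^2_{\mathrm{hom}}(A)_{tors}$ upgrades it to an isomorphism $\mathrm{CH}^2_{\mathrm{alg}}(A)[p^m] \xrightarrow{\sim} \mathrm{CH}^2_{\mathrm{hom}}(A)[p^m]$. By exactness the next map $\mathrm{CH}^2_{\mathrm{hom}}(A)[p^m] \to \mathrm{Griff}^2(A)[p^m]$ is then zero, while the connecting map $\mathrm{Griff}^2(A)[p^m] \to \mathrm{CH}^2_{\mathrm{alg}}(A)/p^m = 0$ is also zero; exactness at $\mathrm{Griff}^2(A)[p^m]$ therefore forces $\mathrm{Griff}^2(A)[p^m] = 0$. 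Since this holds for all $m$, taking the union over $m$ yields $\mathrm{Griff}^2(A)[p^\infty] = 0$.

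I expect the real content to be entirely contained in Theorem \ref{main intro}, with the corollary following formally once that result is available. The two points needing care are the divisibility of $\mathrm{CH}^2_{\mathrm{alg}}(A)$ and the observation that Theorem \ref{main intro} is precisely strong enough to make the two $p^m$-torsion groups coincide, which is exactly what kills the image in the Griffiths group. I would also remark that combining this vanishing with the result of Brent Gordon and Joshi that $\mathrm{Griff}^2(A)$ is $p$-primary torsion over $\overline{k}$ gives $\mathrm{Griff}^2(A) = 0$ outright in this setting.
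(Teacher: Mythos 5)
Your proposal is correct, but it proves the statement by a genuinely different route than the paper. The paper's proof (Theorem \ref{main}(2)) uses the finite-field input recalled in \S\ref{cycles on ab vars}: by Soul\'{e} \cite{Sou84} and K\"{u}nnemann \cite{Kun93}, $\mathrm{CH}^{2}_{\mathrm{hom}}(A)$ is a torsion group when the base is the algebraic closure of a finite field, so $\mathrm{CH}^{2}_{\mathrm{alg}}(A)$ and $\mathrm{CH}^{2}_{\mathrm{hom}}(A)$ coincide with their own torsion subgroups, and Theorem \ref{main intro} then yields $\mathrm{CH}^{2}_{\mathrm{alg}}(A)=\mathrm{CH}^{2}_{\mathrm{hom}}(A)$ outright, i.e.\ the vanishing of all of $\mathrm{Griff}^{2}(A)$ in one step. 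You instead run a snake-lemma argument whose only extra input is divisibility of $\mathrm{CH}^{2}_{\mathrm{alg}}(A)$; your justification of divisibility (factoring through $\mathrm{Jac}(C)(\overline{k})$ via correspondences, with principal divisors on $C$ inducing rational equivalences on $A$) is the standard one and is correct, and the homological bookkeeping is right. You also correctly identify why divisibility is genuinely needed in your route: equality of the torsion subgroups of a subgroup and of the ambient group does not by itself force the quotient to be torsion-free (compare $\mathbb{Z}\subset\mathbb{Q}$, whose quotient $\mathbb{Q}/\mathbb{Z}$ is all torsion); the paper never meets this issue because over $\overline{\mathbb{F}}_{p}$ both groups are entirely torsion. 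The trade-offs: your argument nowhere uses that $k$ is finite, so it actually proves the more general statement that $\mathrm{Griff}^{2}(A)_{tors}=0$ at every prime for supersingular abelian varieties over any algebraically closed field of characteristic $p>0$; on the other hand, over $\overline{k}$ with $k$ finite it gives only the torsion statement, and recovering the full triviality of $\mathrm{Griff}^{2}(A)$ then requires citing \cite[Theorem 5.1]{BGJ02} (as you note at the end), whereas the paper obtains it directly.
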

Together with \cite[Theorem 5.1]{BGJ02}, this shows that $\mathrm{Griff}^{2}(A)$ is trivial. That is, homological equivalence coincides with algebraic equivalence for codimension $2$ cycles on supersingular abelian varieties over the algebraic closure of finite fields.
\\
\\
\emph{Acknowledgements:} The author would like to thank Kirti Joshi for his generosity in sharing his ideas, and for enlightening discussions.

\section{Chow groups of supersingular abelian varieties}\label{cycles on ab vars}

We repeat the discussion from \cite[\S2 and \S3]{BGJ02}. Let $A$ be an abelian variety of dimension $g$ over a field $k$, and let $n$ be a non-negative integer. Then by work of Mukai \cite{Muk81}, Beauville \cite{Bea86} and Deninger-Murre \cite{DM91}, the rational Chow groups of $A$ admit a direct sum decomposition
\begin{equation*}
\mathrm{CH}^{n}(A)\otimes\mathbb{Q}=\bigoplus_{i}\mathrm{CH}^{n}_{i}(A)
\end{equation*}
where $\mathrm{CH}^{n}_{i}(A):=\{Z\in\mathrm{CH}^{n}(A)\otimes\mathbb{Q}\,:\,m_{A}^{\ast}(Z)=m^{2n-i}Z\text{ for all }m\in\mathbb{Z}\}$ and $m_{A}^{\ast}$ denotes the flat pullback by multiplication-by-$m$ on $A$. 

Now suppose that $k$ is algebraically closed of characteristic $p>0$. Then Fakhruddin \cite{Fak02} has proved that if $A$ is a supersingular abelian variety over $k$, $\mathrm{CH}^{n}_{i}(A)=0$ for $i\neq 0,1$. Moreover,  the $\ell$-adic cycle class map induces an isomorphism 
\begin{equation*}
\mathrm{CH}^{n}_{0}(A)\otimes\mathbb{Q}_{\ell}\xrightarrow{\sim}H^{2n}_{\mathrm{\acute{e}t}}(A,\mathbb{Q}_{\ell}(n))
\end{equation*}
for all primes $\ell\neq p$. The same proof shows that the crystalline cycle class map induces an isomorphism 
\begin{equation*}
\mathrm{CH}^{n}_{0}(A)\otimes K\xrightarrow{\sim}H_{\mathrm{cris}}^{2n}(A/W(k))\otimes_{W(k)}K
\end{equation*}	
where $K=W(k)[1/p]$ is the fraction field of the Witt vectors $W(k)$ of $k$. In particular, $\mathrm{CH}^{n}_{1}(A)=\mathrm{CH}^{n}_{\mathrm{hom}}(A)\otimes\mathbb{Q}$ where $\mathrm{CH}^{n}_{\mathrm{hom}}(A)$ is the kernel of the cycle class map.
\begin{rem}\label{Weil independence}
A priori, the definition of the group $\mathrm{CH}^{n}_{\mathrm{hom}}(X)$ of codimension $n$ cycles homologically equivalent to zero on a smooth projective variety $X$ depends on the choice of Weil cohomology theory for $X$. Of course, it is a consequence of the standard conjectures (specifically that homological equivalence coincides with numerical equivalence) that $\mathrm{CH}^{n}_{\mathrm{hom}}(X)$ is independent of the choice of Weil cohomology theory. Notice, though, that Fakhruddin's result shows that $\mathrm{CH}^{n}_{\mathrm{hom}}(A)$ is independent of any choice when $A$ is a supersingular variety over an algebraically closed field of characteristic $p>0$. Since this is the setting that we are interested in, there is no ambiguity in the definition.
\end{rem}

As pointed out in \cite[\S2]{BGJ02}, if $k$ is moreover the algebraic closure of a finite field, then it is known by results of Soul\'{e} \cite{Sou84} and K\"{u}nnemann \cite{Kun93} that $\mathrm{CH}^{n}_{1}(A)=0$. In particular, $\mathrm{CH}^{n}_{\mathrm{hom}}(A)$ is torsion.

\begin{rem}
Beilinson \cite[1.0]{Bei87} has conjectured that $\mathrm{CH}^{n}_{\mathrm{hom}}(X)$ is torsion for any smooth projective variety $X$ over the algebraic closure of a finite field. 
\end{rem}

\section{Abel-Jacobi maps} 

In this section we fix notation involving $\ell$-adic Abel-Jacobi maps, for primes $\ell$ (including $\ell=p$). 

Let $X$ be a smooth projective variety over an algebraically closed field $k$ of characteristic $p\geq 0$. Let $\ell$ be a prime. Define
\begin{equation*}
H^{i}(X,\mathbb{Z}_{\ell}(j)):=\begin{cases} 
      H^{i}_{\mathrm{\acute{e}t}}(X,\mathbb{Z}_{\ell}(j)) & \text{if }\ell\neq p \\
      H^{i-j}(X_{\mathrm{\acute{e}t}},W\Omega^{j}_{X,\log}) & \text{if }\ell= p
   \end{cases}
\end{equation*}
and
\begin{equation*}
H^{i}(X,\mathbb{Q}_{\ell}/\mathbb{Z}_{\ell}(j)):=\begin{cases} 
      H^{i}_{\mathrm{\acute{e}t}}(X,\mathbb{Q}_{\ell}/\mathbb{Z}_{\ell}(j)) & \text{if }\ell\neq p \\
      \displaystyle\varinjlim_{r}H^{i-j}(X_{\mathrm{\acute{e}t}},W_{r}\Omega^{j}_{X,\log}) & \text{if }\ell= p
   \end{cases}
\end{equation*}
where $W_{r}\Omega^{j}_{X,\log}$ denotes the logarithmic Hodge-Witt sheaf of $X$ (see \cite[Ch. I, 5.7]{Ill79}) and the limit is taken over the maps $\underline{p}:W_{r}\Omega^{j}_{X}\rightarrow W_{r+1}\Omega^{j}_{X}$ \cite[Ch. I, Proposition 3.4]{Ill79}. Let 
\begin{equation*}
\lambda_{n,X}:\mathrm{CH}^{n}(X)[\ell^{\infty}]\rightarrow H^{2n-1}(X,\mathbb{Q}_{\ell}/\mathbb{Z}_{\ell}(n))
\end{equation*}
be Bloch's $\ell$-adic Abel-Jacobi map \cite{Blo79} if $\ell\neq p$, and the Gros-Suwa $p$-adic Abel-Jacobi map \cite{GS88} if $\ell=p$. Here $\mathrm{CH}^{n}(X)[\ell^{\infty}]$ denotes the $\ell$-primary torsion subgroup of $\mathrm{CH}^{n}(X)$.

\section{The result}

\begin{thm}\label{main}
Let $k$ be an algebraically closed field of characteristic $p>0$, and let $A$ be a supersingular abelian variety over $k$. Then:
\begin{enumerate}
\item We have 
\begin{equation*}
\mathrm{CH}^{2}_{\mathrm{alg}}(A)_{tors}=\mathrm{CH}^{2}_{\mathrm{hom}}(A)_{tors}=\mathrm{CH}^{2}(A)_{tors}\,.
\end{equation*}

\item If $k$ is the algebraic closure of a finite field, then $\mathrm{Griff}^{2}(A)$ is trivial, i.e. 
\begin{equation*}
\mathrm{CH}^{2}_{\mathrm{alg}}(A)=\mathrm{CH}^{2}_{\mathrm{hom}}(A)\,.
\end{equation*}
\end{enumerate}
\end{thm}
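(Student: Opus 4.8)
The plan is to prove part (1) and deduce part (2) formally. For part (2), note that over $k=\overline{\mathbb{F}_q}$ the cited results of Soul\'e and K\"unnemann give $\mathrm{CH}^2_{\mathrm{hom}}(A)=\mathrm{CH}^2_{\mathrm{hom}}(A)_{tors}$, so part (1) immediately forces $\mathrm{CH}^2_{\mathrm{hom}}(A)=\mathrm{CH}^2_{\mathrm{alg}}(A)$, i.e. $\mathrm{Griff}^2(A)=0$. For part (1) the second equality $\mathrm{CH}^2_{\mathrm{hom}}(A)_{tors}=\mathrm{CH}^2(A)_{tors}$ is automatic: the cycle class map takes values in an integral Weil cohomology and sends a torsion cycle to a torsion class, which dies in the rational cohomology defining homological equivalence (well-defined here by Remark \ref{Weil independence}). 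Thus everything reduces to the first equality, i.e. to showing that a torsion homologically trivial cycle is algebraically trivial. Splitting the torsion subgroup into $\ell$-primary parts, the case $\ell\neq p$ is exactly what is extracted from the proof of \cite[Theorem 5.1]{BGJ02}, so the entire problem is the $p$-primary statement $\mathrm{CH}^2_{\mathrm{hom}}(A)[p^{\infty}]=\mathrm{CH}^2_{\mathrm{alg}}(A)[p^{\infty}]$.

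To attack the $p$-primary case I would first reproduce the setup of \cite{BGJ02}, using the Gros--Suwa map $\lambda_{2,A}:\mathrm{CH}^2(A)[p^{\infty}]\to H^3(A,\mathbb{Q}_p/\mathbb{Z}_p(2))=\varinjlim_r H^1(A_{\mathrm{\acute et}},W_r\Omega^2_{A,\log})$, and exploiting supersingularity to control the target. Since the Newton polygon of $A$ is isoclinic, the slopes of Frobenius on $H^3_{\mathrm{cris}}(A/W(k))$ are all $3/2$, so the slope-$2$ (logarithmic) part is rationally trivial; hence the target of $\lambda_{2,A}$ is a torsion group, and its "geometric" content should be visible only through divisor and product classes. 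Following BGJ one would then try to match $\lambda_{2,A}(\mathrm{CH}^2_{\mathrm{hom}}(A)[p^\infty])$ with $\lambda_{2,A}(\mathrm{CH}^2_{\mathrm{alg}}(A)[p^\infty])$. This is precisely where the $\ell$-adic argument has no direct counterpart: for $\ell\neq p$ one uses the exterior-algebra description $H^{\bullet}_{\mathrm{\acute et}}(A)=\bigwedge^{\bullet}H^1_{\mathrm{\acute et}}(A)$ and the resulting coniveau computation, whereas the logarithmic Hodge--Witt groups admit no such clean description and the relevant injectivity/divisibility of $\lambda_{2,A}$ on $p$-torsion is not available off the shelf.

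To bridge this gap I would argue by induction on $g=\dim A$ via a Bloch--Srinivas style decomposition of the diagonal adapted to the abelian variety structure. The base cases $g\le 2$ are immediate: for $g\le 2$ a codimension-$2$ cycle is a zero-cycle, and a homologically trivial (hence degree-zero) zero-cycle on any smooth projective variety is algebraically trivial. For the inductive step one uses the Beauville--Deninger--Murre projectors together with Fakhruddin's vanishing $\mathrm{CH}^n_i(A)=0$ for $i\neq 0,1$ to write the identity correspondence, modulo algebraic equivalence, as a sum of correspondences that either factor through projections to $(g-1)$-dimensional supersingular abelian quotients or are supported on products with divisors. Applying the induction hypothesis to the former and the settled codimension-$1$ case --- where algebraic, homological and numerical equivalence all coincide on abelian varieties --- to the latter, one would conclude that a class in $\mathrm{CH}^2_{\mathrm{hom}}(A)[p^{\infty}]$ is algebraically trivial.

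The main obstacle is the $p$-integrality of this inductive step, and it is a genuine one: the Beauville projectors are defined only with $\mathbb{Q}$-coefficients, and --- crucially for supersingular $A$ --- any isogeny $A\sim E^g$ to a power of a supersingular elliptic curve typically has degree divisible by $p$, so naively inverting these denominators inverts $p$ and destroys exactly the $p$-primary torsion we are trying to control. The heart of the argument must therefore be to make the diagonal decomposition act integrally at $p$ on $\mathrm{CH}^2(A)[p^\infty]$, and to ensure the correspondences involved preserve algebraic (not merely homological) equivalence. This is the precise point at which the $\ell$-adic shortcuts of \cite{BGJ02} fail, and overcoming it --- rather than any formal manipulation of $\lambda_{2,A}$ --- is what the Bloch--Srinivas induction is designed to accomplish.
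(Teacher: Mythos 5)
Your overall skeleton matches the paper: part (2) follows from part (1) via Soul\'e--K\"unnemann, the inclusion $\mathrm{CH}^{2}(A)_{tors}\subseteq\mathrm{CH}^{2}_{\mathrm{hom}}(A)$ is automatic, the case $\ell\neq p$ is in \cite{BGJ02}, and the $p$-primary case is attacked by induction on $\dim A$ using a Bloch--Srinivas style decomposition of the diagonal coming from \cite[Lemma 3]{Fak02}. But your proof is not complete, and the point where it stops is exactly the theorem's content. Clearing denominators gives $N\Delta_{A}=Z_{1}+Z_{2}$ in $\mathrm{CH}^{g}(A\times A)$ with $\mathrm{Supp}(Z_{1})\subset V_{1}\times A$ and $\mathrm{Supp}(Z_{2})\subset A\times V_{2}$, where $V_{1},V_{2}$ are unions of proper abelian subvarieties; running this on Chow groups, as you propose, one finds for any $z\in\mathrm{CH}^{2}(A)[p^{\infty}]$ that $Nz=N\Delta_{A}^{\ast}(z)$ is algebraically trivial (the divisor contribution because torsion classes in $\mathrm{Pic}$ of an abelian variety lie in $\mathrm{Pic}^{0}$, the codimension-$2$ contribution by induction). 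This only shows that $\mathrm{CH}^{2}(A)[p^{\infty}]/\mathrm{CH}^{2}_{\mathrm{alg}}(A)[p^{\infty}]$ is annihilated by $N$. Since in general $p\mid N$, you get a $p$-primary group of bounded exponent, not the trivial group, and --- as you yourself observe --- no choice of decomposition will make $N$ prime to $p$, because isogenies from a supersingular $A$ to a power of a supersingular elliptic curve have degree divisible by $p$. So the proposal ends by restating the problem (``make the decomposition act integrally at $p$''), which is moreover a direction that cannot succeed; this is a genuine gap, not a detail.

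The missing idea, which is how the paper absorbs the unavoidable factor of $p$ in $N$, is to run the induction on a \emph{cohomological} statement rather than on the Chow-theoretic one: namely, surjectivity of the restricted Abel--Jacobi map $\lambda'_{2,A}\colon\mathrm{CH}^{2}_{\mathrm{alg}}(A)[\ell^{\infty}]\rightarrow H^{3}(A,\mathbb{Z}_{\ell}(2))\otimes_{\mathbb{Z}_{\ell}}\mathbb{Q}_{\ell}/\mathbb{Z}_{\ell}$, uniformly for all $\ell$ including $\ell=p$ (Gros--Suwa). The target is a finite direct sum of copies of $\mathbb{Q}_{\ell}/\mathbb{Z}_{\ell}$ (torsion-freeness of $H^{3}(A,\mathbb{Z}_{p}(2))$ coming from \cite[Lemme 3.12]{GS88}), so $\mathrm{coker}(\lambda'_{2,A})$ is \emph{divisible}; and a divisible group annihilated by a positive integer is trivial, \emph{whether or not $p$ divides that integer}. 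Transporting the decomposition of $N\Delta_{A}$ to cohomology via the compatibility of the Bloch and Gros--Suwa maps with correspondences, and using bijectivity of the Abel--Jacobi map for divisors \cite[Proposition A.28]{ACMV21} together with the inductive surjectivity on the lower-dimensional supersingular pieces, a diagram chase shows $\mathrm{coker}(\lambda'_{2,A})$ is killed by $N$, hence vanishes. Finally, injectivity of $\lambda_{2,A}$ on all of $\mathrm{CH}^{2}(A)[\ell^{\infty}]$ (\cite[Corollary 4]{CTSS83} for $\ell\neq p$, \cite[\S III Proposition 3.4]{GS88} for $\ell=p$) converts this surjectivity into the equality $\mathrm{CH}^{2}_{\mathrm{alg}}(A)[\ell^{\infty}]=\mathrm{CH}^{2}(A)[\ell^{\infty}]$. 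In short, the factor $N$ is neutralized not by improving the decomposition but by sandwiching the Chow groups between an injective Abel--Jacobi map and a divisible cohomology group; without this step your induction does not close.
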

\begin{proof}
(1) Let $\ell$ be a prime. Consider the following commutative diagram
\begin{equation}\label{inclusions}
\begin{tikzpicture}[descr/.style={fill=white,inner sep=1.5pt}]
        \matrix (m) [
            matrix of math nodes,
            row sep=2.5em,
            column sep=2.5em,
            text height=1.5ex, text depth=0.25ex
        ]
        { \mathrm{CH}^{2}(A)[\ell^{\infty}] & H^{3}(A,\mathbb{Q}_{\ell}/\mathbb{Z}_{\ell}(2)) \\
       \mathrm{CH}_{\mathrm{alg}}^{2}(A)[\ell^{\infty}]  & \ \\};

        \path[overlay,->, font=\scriptsize] 
        (m-1-1) edge node[above]{$\lambda_{2,A}$}(m-1-2)
        (m-2-1) edge node[below right]{$\lambda'_{2,A}$} (m-1-2)
        ;
        
        \path[overlay, right hook->, font=\scriptsize]
        (m-2-1) edge (m-1-1)
        ;
                                        
\end{tikzpicture}
\end{equation}
where $\lambda'_{2,A}$ denotes the restriction of $\lambda_{2,A}$. It is known that $\lambda_{2,A}$ is injective (as a consequence of the Merkurjev-Suslin theorem \cite[Corollary 4]{CTSS83} for $\ell\neq p$, \cite[\S III Proposition 3.4]{GS88} for $\ell=p$), hence $\lambda'_{2,A}$ is injective. It therefore suffices to show that $\lambda'_{2,A}$ is surjective, since then all maps in \eqref{inclusions} are isomorphisms and in particular
\begin{equation*}
\mathrm{CH}^{2}_{\mathrm{alg}}(A)[\ell^{\infty}]=\mathrm{CH}^{2}_{\mathrm{hom}}(A)[\ell^{\infty}]=\mathrm{CH}^{2}(A)[\ell^{\infty}]
\end{equation*}
for each prime $\ell$.

For any $n\geq 0$, consider the following diagram
\begin{equation*}
\begin{adjustbox}{width=12cm}
\begin{tikzpicture}[descr/.style={fill=white,inner sep=1.5pt}]
        \matrix (m) [
            matrix of math nodes,
            row sep=2.5em,
            column sep=2.5em,
            text height=1.5ex, text depth=0.25ex
        ]
        { & & \mathrm{CH}^{n}(A)[\ell^{\infty}] &  \\
       0 & H^{2n-1}(A,\mathbb{Z}_{\ell}(n))\otimes_{\mathbb{Z}_{\ell}}\mathbb{Q}_{\ell}/\mathbb{Z}_{\ell} & H^{2n-1}(A,\mathbb{Q}_{\ell}/\mathbb{Z}_{\ell}(n)) & H^{2n}(A,\mathbb{Z}_{\ell}(n)) \\};

        \path[overlay,->, font=\scriptsize] 
        (m-2-1) edge (m-2-2)
        (m-2-2) edge (m-2-3)
        (m-2-3) edge (m-2-4)
        (m-1-3) edge node[right]{$\lambda_{n,A}$}(m-2-3)
        ;

\end{tikzpicture}
\end{adjustbox}
\end{equation*}
Up to a sign, the induced map $\mathrm{CH}^{n}(A)[\ell^{\infty}]\rightarrow H^{2n}(A,\mathbb{Z}_{\ell}(n))$ is the restriction of the cycle class map (\cite[Corollary 4]{CTSS83} for $\ell\neq p$, \cite[\S III Propositions 1.16 and 1.21]{GS88} for $\ell=p$). The bottom row of the diagram is exact (see \cite[(3.33)]{GS88} for exactness when $\ell=p$). Therefore the restriction of $\lambda_{n,A}$ to $\mathrm{CH}^{n}_{\mathrm{hom}}(A)[\ell^{\infty}]$ has image in $H^{2n-1}(A,\mathbb{Z}_{\ell}(n))\otimes_{\mathbb{Z}_{\ell}}\mathbb{Q}_{\ell}/\mathbb{Z}_{\ell}$. In particular, $\lambda'_{2,A}$ has image in $H^{3}(A,\mathbb{Z}_{\ell}(2))\otimes_{\mathbb{Z}_{\ell}}\mathbb{Q}_{\ell}/\mathbb{Z}_{\ell}$. Therefore the cokernel of $\lambda'_{2,A}$ is divisible. Note that $H^{3}(A,\mathbb{Z}_{\ell}(2))$ is torsion-free (the non-trivial case when $\ell=p$ follows from $H^{3}_{\mathrm{cris}}(A/W(k))$ being torsion-free \cite[Lemme 3.12]{GS88}, which it is because $H^{1}_{\mathrm{cris}}$ is always torsion-free and $H^{3}_{\mathrm{cris}}=\wedge^{3}H^{1}_{\mathrm{cris}}$ for abelian varieties), hence $H^{3}(A,\mathbb{Z}_{\ell}(2))\otimes_{\mathbb{Z}_{\ell}}\mathbb{Q}_{\ell}/\mathbb{Z}_{\ell}$ is a direct sum of a finite number of copies of $\mathbb{Q}_{\ell}/\mathbb{Z}_{\ell}$.

We are therefore reduced to showing that $\mathrm{coker}(\lambda'_{2,A})$ is annihilated by a positive integer. (Indeed, $\mathrm{coker}(\lambda'_{2,A})$ is a quotient of the divisible group $H^{3}(A,\mathbb{Z}_{\ell}(2))\otimes_{\mathbb{Z}_{\ell}}\mathbb{Q}_{\ell}/\mathbb{Z}_{\ell}\cong \left(\mathbb{Q}_{\ell}/\mathbb{Z}_{\ell}\right)^{r}$ for some $r$. If $\mathrm{coker}(\lambda'_{2,A})$ is finite then it must be trivial since any finite divisible group is trivial. So we must rule out the case that $\mathrm{coker}(\lambda'_{2,A})$ is infinite, in which case it is a finite number of copies of $\mathbb{Q}_{\ell}/\mathbb{Z}_{\ell}$. But this group is not annihilated by a positive integer.) We shall prove this by induction on the dimension $g$ of $A$. Of course, the entire theorem is trivial if $g=1$, so suppose that $g>1$ and suppose that $\lambda'_{2,B}$ is surjective for supersingular abelian varieties of dimension $\leq g-1$. By \cite[Lemma 3]{Fak02} and its proof, there exist $g$-dimensional abelian subvarieties $Y_{1},\ldots, Y_{n}\subset A\times A$ such that the class of the diagonal $\Delta_{A}$ decomposes as
\begin{equation*}
\Delta_{A}=\sum_{i}c_{i}[Y_{i}]\in\mathrm{CH}^{g}(A\times A)\otimes\mathbb{Q}
\end{equation*}
for some $c_{i}\in\mathbb{Q}$, and such that for each $i$, the image of $Y_{i}$ under at least one of the projections $\mathrm{pr}_{1},\mathrm{pr}_{2}:A\times A\rightarrow A$ has dimension $\leq g-1$ . By clearing denominators, we see that there exists an integer $N>0$ such that
\begin{equation*}
N\Delta_{A}=\sum_{i}d_{i}[Y_{i}]\in\mathrm{CH}^{g}(A\times A)
\end{equation*}
for some $d_{i}\in\mathbb{Z}$. Label the $Y_{1},\ldots, Y_{n}$ so that $A_{i}:=\mathrm{pr}_{1}(Y_{i})$ has dimension $\leq g-1$ for $i=1,\ldots, m$, and $A_{i}:=\mathrm{pr}_{2}(Y_{i})$ has dimension $\leq g-1$ for $i=m+1,\ldots, n$. Let $V_{1}:=A_{1}\cup\cdots\cup A_{m}$ and $V_{2}:=A_{m+1}\cup\cdots\cup A_{n}$, and let $j_{1},j_{2}:V_{1},V_{2}\hookrightarrow A$ be the natural inclusions. Then
\begin{equation*}
N\Delta_{A}=Z_{1}+Z_{2}\in\mathrm{CH}^{g}(A\times A)
\end{equation*}
where $\mathrm{Supp}(Z_{1})\subset V_{1}\times A$ and $\mathrm{Supp}(Z_{2})\subset A\times V_{2}$. Let $\widetilde{V}_{1}:=A_{1}\sqcup\cdots\sqcup A_{m}$ and $\widetilde{V}_{2}:=A_{m+1}\sqcup\cdots\sqcup A_{n}$ be the disjoint unions, and let $\tau_{1}:\widetilde{V}_{1}\rightarrow V_{1}$, $\tau_{2}:\widetilde{V}_{2}\rightarrow V_{2}$ be the natural morphisms. We claim that there is a correspondence $\widetilde{Z}_{1}\in\mathrm{CH}^{g-1}(\widetilde{V}_{1}\times A)$ such that
\begin{equation*}
Z_{1}=\widetilde{Z}_{1}\circ\gamma_{1}
\end{equation*}
where $\gamma_{1}\in\mathrm{CH}^{g}(A\times\widetilde{V}_{1})$ is the correspondence given by the transpose of the graph of $\widetilde{j}_{1}:=j_{1}\circ \tau_{1}$. Indeed, let $V_{1}^{\mathrm{sm}}$ denote the smooth locus of $V_{1}$, and consider the pullback square
\begin{equation*}
\begin{tikzpicture}[descr/.style={fill=white,inner sep=1.5pt}]
        \matrix (m) [
            matrix of math nodes,
            row sep=2.5em,
            column sep=3.5em,
            text height=1.5ex, text depth=0.25ex
        ]
        { \widetilde{V}_{1}\times A & V_{1}\times A\\
   \tau_{1}^{-1}(V_{1}^{\mathrm{sm}})\times A & V_{1}^{\mathrm{sm}}\times A \\
    };

        \path[overlay,->, font=\scriptsize] 
        (m-1-1) edge node[above]{$\tau_{1}\times\mathrm{id}_{A}$} (m-1-2)
        (m-2-1) edge node[above]{$\tau_{1}\times\mathrm{id}_{A}$} (m-2-2)
        ;
                                  
        \path[overlay, right hook->, font=scriptsize]
        (m-2-1) edge (m-1-1)
        (m-2-2) edge (m-1-2)
        ;
                                         
\end{tikzpicture}
\end{equation*}
Since $V_{1}^{\mathrm{sm}}$, $\tau_{1}^{-1}(V_{1}^{\mathrm{sm}})$ and $\widetilde{V}_{1}$ are smooth, the morphisms in the diagram admit refined Gysin pullbacks (see \cite[\S6.6]{Ful84}). Set $\widetilde{Z}_{1}$ to be the closure in $\widetilde{V}_{1}\times A$ of the pullback of $Z_{1}$ along $\tau_{1}^{-1}(V_{1}^{\mathrm{sm}})\times A\rightarrow V_{1}^{\mathrm{sm}}\times A\rightarrow V_{1}\times A$, where consider $Z_{1}$ as a cycle on $V_{1}\times A$. Then $Z_{1}=(j_{1}\circ\tau_{1}\times\mathrm{id}_{A})_{\ast}\widetilde{Z}_{1}=\widetilde{Z}_{1}\circ\gamma_{1}$ by \cite[Proposition 16.1.1]{Ful84}, as desired. The same argument applied to the transpose of $Z_{2}$ shows that there exists a correspondence $\widetilde{Z}_{2}\in\mathrm{CH}^{g-1}(A\times\widetilde{V}_{2})$ such that
\begin{equation*}
Z_{2}=\Gamma_{\widetilde{j}_{2}}\circ\widetilde{Z}_{2}
\end{equation*}
where $\Gamma_{\widetilde{j}_{2}}\in\mathrm{CH}^{g}(\widetilde{V}_{2}\times A)$ is the correspondence given by the graph of $\widetilde{j}_{1}:=j_{2}\circ \tau_{2}$. Hence, 
\begin{equation*}
N\Delta_{A}=Z_{1}+Z_{2}=\widetilde{Z}_{1}\circ\gamma_{1}+\Gamma_{\widetilde{j}_{2}}\circ\widetilde{Z}_{2}
\end{equation*}
and the self-correspondence $N\Delta_{A}^{\ast}:\mathrm{CH}^{2}(A)\rightarrow\mathrm{CH}^{2}(A)$ factors as
\begin{equation*}
\mathrm{CH}^{2}(A)\xrightarrow{\widetilde{Z}_{1}^{\ast}\oplus\widetilde{j}_{2}^{\ast}}\mathrm{CH}^{1}(\widetilde{V}_{1})\oplus\mathrm{CH}^{1}(\widetilde{V}_{2})\xrightarrow{\widetilde{j}_{1\ast}+\widetilde{Z}_{2}^{\ast}}\mathrm{CH}^{2}(A)\,.
\end{equation*}
Since the $\ell$-adic Abel-Jacobi maps are compatible with correspondences (\cite[Proposition 3.5]{Blo79} for $\ell\neq p$, \cite[Proposition 2.9]{GS88} for $\ell=p$), we get a commutative diagram 
  
\begin{equation}\label{correspondence}
\begin{adjustbox}{width=12cm}
\begin{tikzpicture}[descr/.style={fill=white,inner sep=1.5pt}]
        \matrix (m) [
            matrix of math nodes,
            row sep=2.5em,
            column sep=3.5em,
            text height=1.5ex, text depth=0.25ex
        ]
        { \mathrm{CH}^{2}_{\mathrm{alg}}(A)[\ell^{\infty}] & H^{3}(A,\mathbb{Z}_{\ell}(2))\otimes_{\mathbb{Z}_{\ell}}\mathbb{Q}_{\ell}/\mathbb{Z}_{\ell} \\
     \mathrm{CH}^{1}_{\mathrm{alg}}(\widetilde{V}_{1})[\ell^{\infty}]\oplus\mathrm{CH}^{2}_{\mathrm{alg}}(\widetilde{V}_{2})[\ell^{\infty}]   & H^{1}(\widetilde{V}_{1},\mathbb{Z}_{\ell}(1))\otimes_{\mathbb{Z}_{\ell}}\mathbb{Q}_{\ell}/\mathbb{Z}_{\ell}\oplus H^{3}(\widetilde{V}_{2},\mathbb{Z}_{\ell}(2)))\otimes_{\mathbb{Z}_{\ell}}\mathbb{Q}_{\ell}/\mathbb{Z}_{\ell} \\
      \mathrm{CH}^{2}_{\mathrm{alg}}(A)[\ell^{\infty}] & H^{3}(A,\mathbb{Z}_{\ell}(2))\otimes_{\mathbb{Z}_{\ell}}\mathbb{Q}_{\ell}/\mathbb{Z}_{\ell} \\};

        \path[overlay,->, font=\scriptsize] 
        (m-1-1) edge node[left]{$\widetilde{Z}_{1}^{\ast}\oplus\widetilde{j}_{2}^{\ast}$} (m-2-1)
         (m-1-2) edge node[left]{$\widetilde{Z}_{1}^{\ast}\oplus\widetilde{j}_{2}^{\ast}$} (m-2-2)
          (m-2-1) edge node[left]{$\widetilde{j}_{1\ast}+\widetilde{Z}_{2}^{\ast}$} (m-3-1)
        (m-2-2) edge node[left]{$\widetilde{j}_{1\ast}+\widetilde{Z}_{2}^{\ast}$} (m-3-2)
        (m-1-1) edge node[above]{$\lambda'_{2,A}$} (m-1-2)
        (m-2-1) edge node[above]{$\lambda'_{1,\widetilde{V}_{1}}\oplus\lambda'_{2,\widetilde{V}_{2}}$} (m-2-2)
        (m-3-1) edge node[above]{$\lambda'_{2,A}$} (m-3-2)
        ;
                                                 
\end{tikzpicture}
\end{adjustbox}
\end{equation}
where the composition of the vertical arrows is $N\Delta_{A}^{\ast}$. But $\Delta_{A}^{\ast}$ is the identity, so $N\Delta_{A}^{\ast}$ is multiplication-by-$N$.

The map $\lambda'_{1,\widetilde{V}_{1}}:\mathrm{CH}^{1}_{\mathrm{alg}}(\widetilde{V}_{1})[\ell^{\infty}]\rightarrow H^{1}(\widetilde{V}_{1},\mathbb{Z}_{\ell}(1))\otimes_{\mathbb{Z}_{\ell}}\mathbb{Q}_{\ell}/\mathbb{Z}_{\ell}$ is a bijection \cite[Proposition A.28]{ACMV21}. We claim that the map $\lambda'_{2,\widetilde{V}_{2}}:\mathrm{CH}^{2}_{\mathrm{alg}}(\widetilde{V}_{2})[\ell^{\infty}]\rightarrow H^{3}(\widetilde{V}_{2},\mathbb{Z}_{\ell}(2))\otimes_{\mathbb{Z}_{\ell}}\mathbb{Q}_{\ell}/\mathbb{Z}_{\ell}$ is also a bijection. Indeed, it is injective by the same reasoning that showed $\lambda'_{2,A}$ is injective. To see that $\lambda'_{2,\widetilde{V}_{2}}$ is surjective, recall that $\widetilde{V}_{2}:=A_{m+1}\sqcup\cdots\sqcup A_{n}$ and $\lambda'_{2,\widetilde{V}_{2}}$ is the direct sum
\begin{equation*}
\bigoplus_{i=m+1}^{n}\mathrm{CH}^{2}_{\mathrm{alg}}(A_{i})[\ell^{\infty}]\xrightarrow{\oplus\lambda'_{2,A_{i}}}\bigoplus_{i=m+1}^{n}H^{3}(A_{i},\mathbb{Z}_{\ell}(2))\otimes{\mathbb{Z}_{\ell}}\mathbb{Q}_{\ell}/\mathbb{Z}_{\ell}\,.
\end{equation*}
The $A_{i}$ are supersingular abelian varieties of dimension $\leq g-1$ (they are supersingular since they are subvarieties of $A$), so the induction hypothesis implies that $\lambda'_{2,\widetilde{V}_{2}}=\oplus\lambda'_{2,A_{i}}$ is surjective as claimed.

In particular, we see that the middle horizontal arrow in \eqref{correspondence} is a bijection. A diagram chase shows that $\mathrm{coker}(\lambda'_{2,A})$ is annihilated by $N$.

(2) We have seen in \S\ref{cycles on ab vars} that $\mathrm{CH}^{2}_{\mathrm{hom}}(A)$ is a torsion group when $k$ is the algebraic closure of a finite field. Therefore the subgroup $\mathrm{CH}^{2}_{\mathrm{alg}}(A)$ is also torsion. We may then conclude by part (1).
\end{proof}

\begin{rem}
It was already shown in \cite[Theorem 5.1]{BGJ02} that $\mathrm{Griff}^{2}(A)[\ell^{\infty}]=0$ for all primes $\ell\neq p$, so the only new result is that $\mathrm{Griff}^{2}(A)[p^{\infty}]=0$ as well. The proof strategy in Theorem \ref{main} of reducing to showing surjectivity of $\lambda'_{2,A}$ is the same as \cite[Theorem 5.1]{BGJ02}. When $\ell\neq p$, surjectivity of $\lambda'_{2,A}$ is due to Suwa \cite[Th\'{e}or\`{e}me 4.7.1]{Suw88}. Suwa's proof proceeds by considering the following commutative diagram
\begin{equation}\label{ell square}
\begin{tikzpicture}[descr/.style={fill=white,inner sep=1.5pt}]
        \matrix (m) [
            matrix of math nodes,
            row sep=2.5em,
            column sep=3.5em,
            text height=1.5ex, text depth=0.25ex
        ]
        { \mathrm{CH}^{1}(A)\otimes\mathbb{Z}_{\ell}\times\mathrm{CH}^{1}_{\mathrm{alg}}(A)[\ell^{\infty}] & \mathrm{CH}^{2}_{\mathrm{alg}}(A)[\ell^{\infty}]\\
   H^{2}(A,\mathbb{Z}_{\ell}(1))\times H^{1}(A,\mathbb{Q}_{\ell}/\mathbb{Z}_{\ell}(1)) & H^{3}(A,\mathbb{Q}_{\ell}/\mathbb{Z}_{\ell}(2)) \\
    };

        \path[overlay,->, font=\scriptsize] 
        (m-1-1) edge node[left]{$\mathrm{cl}\times\lambda'_{1,A}$} (m-2-1)
        (m-1-2) edge node[right]{$\lambda'_{2,A}$} (m-2-2)
        (m-1-1) edge node[above]{$\cdot$} (m-1-2)
        (m-2-1) edge node[above]{$\cup$} (m-2-2)
        ;
                                                 
\end{tikzpicture}
\end{equation}
The cycle class map $\mathrm{cl}$ is surjective for supersingular abelian varieties by \cite[Appendix]{Shi75}, and we have already seen that $\lambda'_{1,A}$ is a bijection. The cup-product map along the bottom of the square is a surjection as a consequence of $H^{i}(A,\mathbb{Z}/\ell\mathbb{Z})=\bigwedge^{i}H^{1}(A,\mathbb{Z}/\ell\mathbb{Z})$ for abelian varieties. This forces $\lambda'_{2,A}$ to be surjective.

In the case $\ell=p$, we have the commutative square analogous to \eqref{ell square}. Unravelling notation, the square is as follows
 \begin{equation*}
\begin{tikzpicture}[descr/.style={fill=white,inner sep=1.5pt}]
        \matrix (m) [
            matrix of math nodes,
            row sep=2.5em,
            column sep=3.5em,
            text height=1.5ex, text depth=0.25ex
        ]
        { \mathrm{CH}^{1}(A)\otimes\mathbb{Z}_{p}\times\mathrm{CH}^{1}_{\mathrm{alg}}(A)[p^{\infty}] & \mathrm{CH}^{2}_{\mathrm{alg}}(A)[p^{\infty}]\\
   H^{1}(A_{\mathrm{\acute{e}t}},W\Omega_{A,\log}^{1})\times\displaystyle\varinjlim_{r}H^{0}(A_{\mathrm{\acute{e}t}},W_{r}\Omega_{A,\log}^{1}) & \displaystyle\varinjlim_{r}H^{1}(A_{\mathrm{\acute{e}t}},W_{r}\Omega_{A,\log}^{2}) \\
    };

        \path[overlay,->, font=\scriptsize] 
        (m-1-1) edge node[left]{$\mathrm{cl}\times\lambda'_{1,A}$} (m-2-1)
        (m-1-2) edge node[right]{$\lambda'_{2,A}$} (m-2-2)
        (m-1-1) edge node[above]{$\cdot$} (m-1-2)
        (m-2-1) edge node[above]{$\cup$} (m-2-2)
        ;
                                                 
\end{tikzpicture}
\end{equation*}
The cup-product map along the bottom horizontal is rarely surjective when $A$ is not an ordinary abelian variety. Indeed, we have $\mathrm{CH}^{1}_{\mathrm{alg}}(A)=\mathrm{Pic}^{0}_{A/k}(k)$, so if $A$ is an abelian variety with $p$-rank $0$ (if $A$ is supersingular, for example) then $\mathrm{CH}^{1}_{\mathrm{alg}}(A)[p^{\infty}]\cong\varinjlim_{r}H^{0}(A_{\mathrm{\acute{e}t}},W_{r}\Omega_{A,\log}^{1})$ is the trivial group. This is why we must use a different argument for the $\ell=p$ case of Theorem \ref{main} than the argument for $\ell\neq p$ used in \cite[Theorem 5.1]{BGJ02}. Notice that the proof of Theorem \ref{main} treats all primes $\ell$ ( including $\ell=p$), and in particular gives a new proof of \cite[Theorem 5.1]{BGJ02}. 
\end{rem}

\begin{rem}
The proof of Theorem \ref{main} shows that the inclusion
\begin{equation*}
H^{3}(A,\mathbb{Z}_{\ell}(2))\otimes_{\mathbb{Z}_{\ell}}\mathbb{Q}_{\ell}/\mathbb{Z}_{\ell}\hookrightarrow H^{3}(A,\mathbb{Q}_{\ell}/\mathbb{Z}_{\ell}(2))
\end{equation*}
is an equality for all primes $\ell$. This was known for $\ell\neq p$ as a consequence of the proof of \cite[Theorem 5.1]{BGJ02}.
\end{rem}


\begin{thebibliography}{9}

\bibitem[ACMV21]{ACMV21}
J. Achter, S. Casalaina-Martin, C. Vial, \emph{On the image of the second $\ell$-adic Bloch map},  Rationality of varieties, 15--73, Progr. Math., 342, Birkhäuser/Springer, 2021.

\bibitem[Bea86]{Bea86}
A. Beauville, \emph{Sur l'anneau de Chow d'une vari\'{e}t\'{e} ab\'{e}lienne}, Math. Ann. 273 (1986), no. 4, 647--651.

\bibitem[Bei87]{Bei87}
A. Beilinson, \emph{Height pairing between algebraic cycles},  K-theory, arithmetic and geometry (Moscow, 1984–1986), 1–25, Lecture Notes in Math., 1289, Springer, Berlin, 1987.

\bibitem[Blo79]{Blo79}
S. Bloch, \emph{Torsion algebraic cycles and a theorem of Roitman}, Compositio Math. 39 (1979), no. 1, 107--127.

\bibitem[Bl84]{Bl84}
S. Bloch, \emph{Algebraic cycles and values of $L$-functions}, J. Reine Angew. Math. 350 (1984), 94--108.

\bibitem[Bl85]{Bl85}
S. Bloch, \emph{Algebraic cycles and values of $L$-functions. II},  Duke Math. J. 52 (1985), no. 2, 379--397.

\bibitem[BK86]{BK86}
S. Bloch, K. Kato, \emph{$p$-adic \'{e}tale cohomology}, Inst. Hautes \'{E}tudes Sci. Publ. Math. No. 63 (1986), 107--152.

\bibitem[BS83]{BS83}
S. Bloch, V. Srinivas, \emph{Remarks on correspondences and algebraic cycles}, Amer. J. Math. 105 (1983), no. 5, 1235--1253.

\bibitem[BGJ02]{BGJ02}
B. Brent Gordon, K. Joshi, \emph{Griffiths groups of supersingular abelian varieties}, Canad. Math. Bull., 45 (2002), no. 2, 213--219.

\bibitem[Cer83]{Cer83}
G. Ceresa, \emph{$C$ is not algebraically equivalent to $C^{-}$ in its Jacobian},  Ann. of Math. (2) 117 (1983), no. 2, 285–291.

\bibitem[Cle83]{Cle83}
H. Clemens, \emph{Homological equivalence, modulo algebraic equivalence, is not finitely generated}, Publ. Math. IHES 58, 19--38.

\bibitem[CTSS83]{CTSS83}
J.-L. Colliot-Th\'{e}l\`{e}ne, J.-J. Sansuc, C. Soul\'{e}, \emph{Torsion dans le groupe de Chow de codimension deux}, Duke Math. J. 50 (1983), no. 3, 763--801.

\bibitem[DM91]{DM91}
C. Deninger, J. Murre, \emph{Motivic decomposition of abelian schemes and the Fourier transform}, J. Reine Angew. Math. 422 (1991), 201--219.

\bibitem[Fak02]{Fak02}
N. Fakhruddin, \emph{Remarks on the Chow groups of supersingular varieties}, Canad. Math. Bull., 45(2002), 204--212.

\bibitem[Ful84]{Ful84}
W. Fulton, \emph{Intersection Theory}, Ergebnisse der Mathematik und ihrer Grenzgebiete (3), 2. Springer-Verlag, Berlin, 1984.

\bibitem[Gri69]{Gri69}
P. Griffiths, \emph{On the periods of certain rational integrals I, II}, Ann. of Math. 90 (1969), 460--541.

\bibitem[GS88]{GS88}
M. Gros, N. Suwa, \emph{La conjecture de Gersten pour les faisceaux de Hodge-Witt logarithmique}, Duke Math. J. 57 (1988), no. 2, 615--628.

\bibitem[Har83]{Har83}
B. Harris, \emph{Homological versus algebraic equivalence in a Jacobian}, Proc. Nat. Acad. Sci. U.S.A. 80 (1983), no. 4, i, 1157--1158.

\bibitem[Ill79]{Ill79}
L. Illusie, \emph{Complexe de de Rham-Witt et cohomologie cristalline}, Ann. Sci. \'{E}cole. Norm. Sup. (4) 12 (1979), 501--661.

\bibitem[K\"{u}n93]{Kun93}
K. K\"{u}nnemann, \emph{A Lefschetz decomposition for Chow motives of abelian schemes},  Invent. Math. (1) 113 (1993), 85--102.

\bibitem[Muk81]{Muk81}
S. Mukai, \emph{Duality between $D(X)$ and $D(\hat{X})$ with its application to Picard sheaves}, Nagoya Math. J. 81 (1981), 153--175.

\bibitem[Oor74]{Oor74}
F. Oort, \emph{Subvarieties of moduli spaces}, Invent. Math. 24 (1974), 95--119.

\bibitem[Sch95]{Sch95}
C. Schoen, \emph{On the computation of the cycle class map for nullhomologous cycles over the algebraic closure of a finite field}, Ann. Sci. \'{E}cole Norm. Sup. (4) 28 (1995), no. 1, 1--50.

\bibitem[Shi75]{Shi75}
T. Shioda, \emph{Algebraic cycles on certain K3 surfaces in characteristic $p$}, Manifolds – Tokyo 1973 (Proc. Internat. Conf., Tokyo, 1973), pp. 357--364. Univ. Tokyo Press, Tokyo, 1975.

\bibitem[Sch22]{Sch22}
S. Schreieder, \emph{Infinite torsion in Griffiths groups
}, arXiv:2011.15047, to appear in Journal of the EMS.

\bibitem[Sou84]{Sou84}
C. Soul\'{e}, \emph{Groupes de Chow et K-th\'{e}orie de vari\'{e}t\'{e}s sur un corps fini}, Math. Ann. 268(1984), 317--345.

\bibitem[Suw88]{Suw88}
N. Suwa, \emph{Sur l'image de l'application d'Abel-Jacobi de Bloch},  Bull. Soc. Math. France 116 (1988), no. 1, 69--101.

\bibitem[Voi00]{Voi00}
C. Voisin, \emph{The Griffiths group of a general Calabi-Yau threefold is not finitely generated}, Duke Math. J. 102 (1), 151--186.
\end{thebibliography}
\end{document}